\newtheorem{prop}[theorem]{Proposition}
\newtheorem{lem}[theorem]{Lemma}
\newtheorem{cor}[theorem]{Corollary}
\title*{Some Notes on Weighted Sum Formulae for Double Zeta Values}
\author{James Wan}
\institute{CARMA, The University of Newcastle, Callaghan, NSW, Australia, \email{james.wan@newcastle.edu.au}}
\begin{document}

\maketitle

\abstract{We present a unified approach which gives completely elementary proofs of three weighted sum formulae for double zeta values. This approach also leads to new evaluations of sums relating to the harmonic numbers, the alternating double zeta values, and the Witten zeta function. We discuss a heuristic for finding or dismissing the existence of similar simple sums. We also produce some new sums from recursions involving the Riemann zeta and the Dirichlet beta functions.}

\section{Introduction}

Multiple zeta values are a natural generalisation of the Riemann zeta function at the positive integers; for our present purposes, we shall only consider multiple zeta values of length 2 (or \textit{double zeta values}), defined for integers $a \ge 2$ and $b \ge 1$ by 
\begin{equation}
\zeta(a,b) = \sum_{n=1}^\infty \sum_{m=1}^{n-1} \frac{1}{n^a m^b}.
\end{equation}
It is rather immediate from series manipulations that 
\begin{equation} \label{easy}
\zeta(a,b)+\zeta(b,a) = \zeta(a)\zeta(b)-\zeta(a+b),
\end{equation}
 thus we can compute in closed form $\zeta(a,a)$, though it is not \textit{a priori} obvious that many other multiple zeta values can be factored into Riemann zeta values. Euler was among the first to study multiple zeta values; indeed, he gave the sum formula (for $s \ge 3$)
\begin{equation} \label{sum1}
 \sum_{j=2}^{s-1} \zeta(j,s-j) = \zeta(s).
\end{equation}
When $s=3$, this formula reduces to the celebrated result $\zeta(2,1)=\zeta(3)$, which has many other proofs \cite{goldbach}. Formula \eqref{sum1} itself may be shown in many ways, one of which uses partial fractions, telescoping sums and change of summation order, which we present in Section \ref{sec2}. Given the ease with which formula \eqref{sum1} may be derived or even experimentally observed (see Section \ref{sec4}), it is perhaps surprising that a similar equation, with `weights' $2^j$ inserted, was only first discovered in 2007 \cite{oz}:
\begin{equation} \label{sum2}
 \sum_{j=2}^{s-1}2^j \zeta(j,s-j) =  (s+1)\zeta(s).
\end{equation}
Formula \eqref{sum2} was originally proven in \cite{oz} using the closed form expression for $\zeta(n, 1)$ (which follows from \eqref{easy} and \eqref{sum1}), together with induction on shuffle relations -- relations arising from iterated integration of generalised polylogarithms which encapsulate the multiple zeta values. Equation \eqref{sum2} has been generalised to more sophisticated weights other than $2^j$ using generating functions, and to lengths greater than 2 (see e.g. \cite{guoxie}). 

In conjunction, \eqref{easy}, \eqref{sum1} and \eqref{sum2} can be used to find a closed form for $\zeta(a,b)$ for $a+b \le 6$. Indeed, it is a result Euler wrote down and first elucidated in \cite{explicit} that all $\zeta(a,b)$ with $a+b$ odd may be expressed in terms of Riemann zeta value (in contrast, $\zeta(5,3)$ is conjectured not reducible to more fundamental constants).

The third weighted sum we will consider is
\begin{equation} \label{sum3}
\sum_{j=2}^{2s-1} (-1)^j \zeta(j,2s-j) = \frac{1}{2}\zeta(2s).
\end{equation}
Given that all known proofs of \eqref{sum2} had their genesis in more advanced areas, one purpose of this note is to show that \eqref{sum2} and the alternating \eqref{sum3} are not intrinsically harder than \eqref{sum1} and can be proven in a few short lines. We use the same techniques in Section \ref{sec3} to give similar identities involving closely related functions. We also observe that some double zeta values sums are related to recursions (or convolutions) satisfied by the Riemann zeta function, a connection which we exploit in Section \ref{sec4}. Lastly, we use such recursions and a reflection formula to produce new results for character sums as defined in \cite{bzb}.

\section{Elementary Proofs} \label{sec2}

In the proofs below, the orders of summation may be interchanged freely, as the sums involved are absolutely convergent.

\begin{proof}[of \eqref{sum2}] We write the left hand side of \eqref{sum2} as 
\[\sum_{j=2}^{s-1} \sum_{m=1}^\infty \sum_{n=1}^\infty \frac{2^j}{n^{s-j}(n+m)^j}.\] 
We consider the 2 cases, $m=n$ and $m \ne n$. In the former case the sum immediately yields $(s-2)\zeta(s)$. In the latter case, we do the geometric sum in $j$ first to obtain
\begin{equation} \label{split}
 \sum_{\stackrel{m,n>0}{m \ne n}} \frac{2^s}{(n^2-m^2)(n+m)^{s-2}}-\frac{4}{(n^2-m^2)n^{s-2}}.
\end{equation}
The first summand in \eqref{split} has antisymmetry in the variables $m, n$ and hence vanishes when summed. 

For the second term in \eqref{split}, we use partial fractions to obtain
\[ \sum_{\stackrel{m>0}{m \ne n}} \frac{1}{m^2-n^2} = \frac{1}{2n} \sum_{\stackrel{m>0}{m \ne n}} \frac{1}{m-n}-\frac{1}{m+n}= \frac{3}{4n^2}, \]
as the last sum telescopes (this is easy to see by first summing up to $m=3n$, then looking at the remaining terms $2n$ at a time).

Therefore, summing over $n$ in the second term of \eqref{split} gives $3 \zeta(s)$. The result follows. \qed
\end{proof}

Our proof suggests that the base `2' in the weighted sum is rather special as it induces antisymmetry. Another special case is obtained by replacing the 2 by a 1, and the same method proves Euler's result.

\begin{proof}[of \eqref{sum1}] We apply the same procedure as in the previous proof and sum the geometric series first, so the left hand side becomes
\begin{align*}
\sum_{m,n>0} \frac{1}{m(m+n)n^{s-2}} - \frac{1}{m(m+n)^{s-1}} & = \sum_{n>0} \frac{1}{n^{s-1}}\sum_{m>0}\biggl(\frac{1}{m}-\frac{1}{m+n}\biggr) - \zeta(s-1,1) \\
& = \sum_{n=1}^\infty \frac{1}{n^{s-1}} \sum_{k=1}^n \frac{1}{k} - \zeta(s-1,1) \\
& = \sum_{n=1}^\infty \frac{1}{n^s} + \sum_{n=1}^\infty \frac{1}{n^{s-1}} \sum_{k=1}^{n-1} \frac{1}{k} - \zeta(s-1,1) \\
& = \zeta(s),
\end{align*}
where we have used partial fractions for the first equality, and telescoping for the second.
\qed
\end{proof}

Likewise we may easily prove the alternating sum \eqref{sum3}:

\begin{proof}[of \eqref{sum3}] We write the left hand side out in full as above, then perform the geometric sum first to obtain
\[ \sum_{m,n>0} \frac{1}{(m+n)(m+2n)n^{2s-2}}-\frac{1}{(m+2n)(m+n)^{2s-1}}.\]
Let $k=m+n$, so we have
\[ \sum_{k>n>0} \frac{1}{k(k+n)n^{2s-2}}-\frac{1}{(k+n)k^{2s-1}}. \]
In the first term, use partial fractions and sum over $k$ from $n+1$ to $\infty$; in the second term, sum over $n$ from 1 to $k-1$. We get
\[ \biggl(\sum_{n>0} \frac{1}{n^{2s-1}} \sum_{k=n+1}^{2n}\frac{1}{k}\biggr) - \biggl(\sum_{k>0} \frac{1}{k^{2s-1}} \sum_{n=k+1}^{2k-1}\frac{1}{n}\biggr). \]
It now remains to observe that if we rename the variables in the second bracket, then the two sums telescope to $\sum_{n>0} 1/(2n^{2s}) = \zeta(2s)/2$. Hence \eqref{sum3} holds. \qed
\end{proof}

\begin{remark} \label{r1} The final sums we shall consider in this section are 
\begin{equation} \label{sum4}
\sum_{j=1}^{s-1} \zeta(2j,2s-2j) = \frac34 \zeta(2s), \qquad \sum_{j=1}^{s-1} \zeta(2j+1,2s-2j-1) = \frac14 \zeta(2s).
\end{equation}
These results were first given in \cite{gkz} and later proven in a more direct manner in \cite{nakamura} using recursion of the Bernoulli numbers. The difference of the two equations in \eqref{sum4} is \eqref{sum3} and the sum is a case of \eqref{sum1}. Therefore, the elementary nature of  \eqref{sum4} is revealed since we have elementary proofs of \eqref{sum1} and \eqref{sum3}.


If we add the first equation in \eqref{sum4} to itself but reverse the order of summation, then upon applying \eqref{easy} we produce the identity
\[ \sum_{j=1}^{s-1} \zeta(2j)\zeta(2n-2j) = \Bigl(n+\frac12\Bigr)\zeta(2s), \]
which is usually derived from the generating function of the Bernoulli numbers $B_n$, since $2(2n)! \, \zeta(2n) = (-1)^{n+1}(2\pi)^{2n} B_{2n}$. \qed
\end{remark}

\section{New Sums} \label{sec3}

We shall see in this section that the elementary methods in Section \ref{sec2} can in fact take us a long way.

\subsection{Witten Zeta Function}

The \textit{Witten zeta function} (also known as the Tornheim, or Mordell, double sum) is defined as
\[ W(r,s,t) = \sum_{n=1}^\infty \sum_{m=1}^\infty \frac{1}{n^r m^s (n+m)^t}.\]
Note that $W(r, s, 0) = \zeta(r)\zeta(s)$ and $W(r,0,t) = W(0,r,t) = \zeta(t,r)$. Due to the simple recursion $W(r,s,t) = W(r-1,s,t+1)+W(r,s-1,t+1)$, when $r,s,t$ are positive integers $W$ may be expressed in terms of Riemann zeta or double zeta values (see e.g. \cite{huard}).

We again emulate the proof of \eqref{sum2} to obtain what seems to be a new sum over $W$. 
\begin{theorem} For integers $a \ge 0$ and $s \ge 3$,
\begin{align} \nonumber
\sum_{j=2}^{s-1} W(s-j,a,j) & = (-1)^a \zeta(s+a)+ (-1)^a \zeta(s+a-1,1)-\zeta(s-1,a+1) \label{thm2} \\ 
& \quad - \sum_{i=2}^{a+1} (-1)^{i+a} \zeta(i)\zeta(s+a-i) \\ \nonumber
& =  \sum_{i=2}^{s-1} \binom{i+a-2}{a} \zeta(i+a,s-i) + \sum_{i=s-a}^{s-1} \binom{i+a-2}{s-3} \zeta(i+a,s-i).
\end{align}
\end{theorem}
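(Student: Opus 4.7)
The plan is to mimic the proof of \eqref{sum2}: swap the sum order and perform the geometric sum in $j$ first. Writing the LHS as $\sum_{n,m\ge 1} m^{-a} \sum_{j=2}^{s-1} n^{-(s-j)}(n+m)^{-j}$, the inner geometric sum evaluates to $\frac{1}{mn^{s-2}(n+m)} - \frac{1}{m(n+m)^{s-1}}$ exactly as computed in the proof of \eqref{sum2}. After multiplying by $m^{-a}$ and summing over $n,m$, the second piece gives $-\zeta(s-1,a+1)$ directly (via the substitution $k=n+m$), reducing the theorem to the evaluation of
\[A := \sum_{n,m\ge 1} \frac{1}{m^{a+1} n^{s-2}(n+m)}.\]

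For the first equality, I would apply partial fractions in $m$:
\[\frac{1}{m^{a+1}(n+m)} = \sum_{k=0}^{a} \frac{(-1)^k}{n^{k+1} m^{a+1-k}} + \frac{(-1)^{a+1}}{n^{a+1}(n+m)},\]
proved easily by induction on $a$ from $\frac{1}{m(n+m)}=\frac1n(\frac1m-\frac1{n+m})$. Multiplying by $n^{-(s-2)}$ and summing, the $k<a$ terms yield the convergent products $(-1)^k \zeta(s+k-1)\zeta(a+1-k)$, while the $k=a$ term is individually divergent and must be combined with the trailing $(-1)^{a+1} n^{-(a+1)}(n+m)^{-1}$; their difference telescopes to $(-1)^a H_n/n^{s+a-1}$ (the same trick used in the proof of \eqref{sum1}), contributing $(-1)^a[\zeta(s+a)+\zeta(s+a-1,1)]$. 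Re-indexing the convergent part via $i=a+2-k$ gives precisely the first stated form.

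For the second equality, I would change variables to $k=n+m$ inside $A$, so that
\[A = \sum_{k\ge 2} \frac{1}{k} \sum_{m=1}^{k-1} \frac{1}{m^{a+1}(k-m)^{s-2}},\]
and then apply the binomial partial fraction identity
\[\frac{1}{m^{p}(k-m)^{q}} = \sum_{i=1}^{p}\binom{p+q-i-1}{q-1}\frac{1}{k^{p+q-i}m^i} + \sum_{i=1}^{q}\binom{p+q-i-1}{p-1}\frac{1}{k^{p+q-i}(k-m)^i}\]
with $p=a+1$, $q=s-2$. The inner sum $\sum_{m=1}^{k-1}$ replaces both $m^{-i}$ and $(k-m)^{-i}$ by $\sum_{j=1}^{k-1} j^{-i}$, and the subsequent outer sum over $k$ produces $\zeta(a+s-i,\,i)$ for each $i$. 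Re-indexing by $j=a+s-i$ and observing that the $j=s-1$ term in the family with coefficient $\binom{j-2}{s-3}$ is exactly $\binom{s-3}{s-3}\zeta(s-1,a+1)=\zeta(s-1,a+1)$, which cancels the trailing $-\zeta(s-1,a+1)$, yields the second stated form with the two index ranges $[s,\,a+s-1]$ and $[a+2,\,a+s-1]$.

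The main obstacle will be the careful handling of the divergent intermediate sums in the first derivation --- rigorous telescoping requires pairing the problematic $k=a$ term with the trailing fraction \emph{before} taking the limit in $m$ --- together with tracking the correct binomial coefficients and index ranges in the second. Note that establishing equality of the two forms directly, without routing through $A$, would require repeated use of \eqref{easy} along with non-elementary combinatorial identities between sums of the shape $\sum_i (i-1)\,\zeta(i+a,s-i)$ and single double zetas like $\zeta(2,s+a-2)$; so it is far more transparent to prove each form from the double sum independently.
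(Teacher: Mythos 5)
Your proposal is correct, and for the first equality it follows essentially the same route as the paper: geometric summation over $j$, the partial-fraction expansion of $1/(m^{a+1}(m+n))$, recognition of the second piece as $\zeta(s-1,a+1)$, and the pairing of the two individually divergent terms into $(-1)^a H_n/n^{s+a-1}$, giving $(-1)^a(\zeta(s+a)+\zeta(s+a-1,1))$ --- exactly the sketch in the paper, with the convergence bookkeeping made explicit. (One trivial slip: the re-indexing of the convergent part should be $i=a+1-k$, not $i=a+2-k$; with the correct substitution the sign $(-1)^k=-(-1)^{i+a}$ and the arguments $\zeta(i)\zeta(s+a-i)$ come out as stated.) For the second equality the paper merely says that it follows because Witten zeta values reduce to double zeta values, i.e.\ implicitly by expanding each $W(s-j,a,j)$ (as in \cite{huard}) and re-summing over $j$; you instead apply the standard binomial partial-fraction identity once to the aggregate double sum $A$ with $p=a+1$, $q=s-2$, which produces the two binomial families directly and lets the extra $i=a+1$ term $\binom{s-3}{s-3}\zeta(s-1,a+1)$ cancel the $-\zeta(s-1,a+1)$ from the second piece. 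This is a tidier organisation: it avoids interchanging the binomial re-summation over $j$ with the reduction of each Witten value, and it makes the index ranges $[a+2,a+s-1]$ and $[s,a+s-1]$ (equivalently $i\in[2,s-1]$ and $i\in[s-a,s-1]$ in the statement) and the coefficients $\binom{j-2}{a}$, $\binom{j-2}{s-3}$ transparent, at the cost of not exhibiting the individual $W$-to-double-zeta reductions that the paper alludes to.
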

\begin{proof}
The second equality follows after some simplification as Witten zeta values can be expressed as double zeta values. For the first equality, we sketch the proof based on that of \eqref{sum2}. Writing the left hand side of \eqref{thm2} as a triple sum, we perform the geometric sum first to produce
\[ \sum_{m,n>0} \frac{(-1)^a}{m^{a+1}n^{s-2}(m+n)}-\frac{(-1)^a}{m^{a+1}(m+n)^{s-1}}. \]
To the first term we apply the partial fraction decomposition
\[ \frac{1}{m^b(m+n)} = \frac{(-1)^b}{n^b(m+n)}+\sum_{i=1}^b \frac{(-1)^{b-i}}{m^i \, n^{b+1-i}}. \]
We recognise the resulting sums as well as the second term above as Riemann zeta and double zeta values. The result follows readily. \qed
\end{proof}

When $a=0$ in \eqref{thm2}, we recover \eqref{sum1}; when $a=1$, we obtain the very pretty formula
\begin{cor}
\begin{equation} \label{witten}
\sum_{j=2}^s W(s-j,1,j) = \zeta(2,s-1),
\end{equation}
\end{cor}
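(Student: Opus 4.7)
The plan is to specialise the theorem to $a=1$, simplify the right-hand side via the stuffle/reflection relation \eqref{easy}, and then account for the mismatch in the summation range ($j$ runs to $s-1$ in the theorem but to $s$ in the corollary).

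First I would substitute $a=1$ into the first equality of \eqref{thm2}. The four pieces of the right-hand side evaluate to $-\zeta(s+1)$, $-\zeta(s,1)$, $-\zeta(s-1,2)$, and (since the index $i$ in the last sum runs only over the single value $i=2$) $+\zeta(2)\zeta(s-1)$. So at this stage one has
\[
\sum_{j=2}^{s-1} W(s-j,1,j) = -\zeta(s+1) - \zeta(s,1) - \zeta(s-1,2) + \zeta(2)\zeta(s-1).
\]

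Next I would apply \eqref{easy} with parameters $(a,b)=(2,s-1)$, which gives $\zeta(2)\zeta(s-1)-\zeta(s+1) = \zeta(2,s-1) + \zeta(s-1,2)$. Substituting this into the preceding display collapses the right-hand side to $\zeta(2,s-1) - \zeta(s,1)$.

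Finally I would absorb the extra $j=s$ term present in the corollary but absent from the theorem. The term is $W(0,1,s) = \sum_{m,n>0} 1/(m(m+n)^s)$; setting $k=m+n$ and interchanging summation recognises this directly as $\zeta(s,1)$. Adding $\zeta(s,1)$ to both sides of the previous identity yields $\sum_{j=2}^{s} W(s-j,1,j) = \zeta(2,s-1)$, as desired. There is no real obstacle here beyond careful bookkeeping — the only thing to watch is that the single index $i=2$ in the Riemann-zeta-product sum of \eqref{thm2} produces the correct sign, and that the boundary value $W(0,1,s)$ is identified with $\zeta(s,1)$ rather than $\zeta(1,s)$ (the latter is divergent anyway).
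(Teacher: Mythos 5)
Your proposal is correct and follows the paper's own (implicit) route: the paper simply asserts the corollary as the $a=1$ case of the theorem, and your calculation supplies exactly the intended bookkeeping — the sign of the single $i=2$ product term, the use of \eqref{easy} with $(2,s-1)$, and the identification of the extra boundary term $W(0,1,s)=\zeta(s,1)$.
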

which, by the second equality in \eqref{thm2}, is equivalent to 
\begin{equation} \sum_{j=2}^{s-1} j \, \zeta(j, s-j) = 2\zeta(s) + \zeta(2,s-1) - (s-2) \zeta(s-1,1). \end{equation}
When $a=2$  in \eqref{thm2}, we have
\[ \sum_{j=2}^s W(s-j,2,j) = \zeta(s+2)+\zeta(s+1,1)+\zeta(3,s-1)-\zeta(2,s). \]

A counterpart to \eqref{witten} is the following alternating sum:
\begin{equation} \sum_{j=2}^{2s} (-1)^j \, W(2s+1-j,1,j) = \zeta(2s+1,1) + \frac14 \zeta(2s+2), \end{equation}
and the same procedure can be used to prove this and to provide a closed form for the general case, i.e. the alternating sum of $W(s-j,a,j)$, though we omit the details.

\subsection{Sums Involving the Harmonic Numbers}

The $n$th \textit{harmonic number} is given by $H_n = \sum_{k=1}^n \frac{1}{k}$. If we replace $2s$ by $2s+1$ in the proof of \eqref{sum3} (that is, when the sum of arguments in the double zeta value is odd instead of even), then we obtain
\begin{equation}
 \frac{5}{2}\zeta(2s+1)+2\zeta(2s,1)+\sum_{j=2}^{2s} (-1)^j \zeta(j,2s+1-j) = 2 \sum_{n=1}^\infty \frac{H_{2n}}{n^{2s}}.
\end{equation}
Combined with known double zeta values, we  can evaluate the right hand side, giving
\[ \sum_{n=1}^\infty \frac{H_{2n}}{n^4} = \frac{37}{4}\zeta(5)-\frac{2}{3}\pi^2\zeta(3), \]
etc, in agreement with results obtained via Mellin transform and generating functions in \cite{bzb} (in whose notation such sums are related to $[2a,1](2s,1)$ -- this notation is explained in Section \ref{sec4}). Indeed, replacing our right hand side with results in \cite{bzb}, we have:
\begin{equation}
 \sum_{j=2}^{2s} (-1)^j \zeta(j,2s+1-j)  = (4^s-s-2)\zeta(2s+1) - 2 \sum_{k=1}^{s-1}(4^{s-k}-1)\zeta(2k)\zeta(2s+1-2k). 
\end{equation}
Similarly, using weight $\frac12$ (instead of 2), we have another new result:
\begin{lem} For integer $s \ge 3$,
\begin{align} \nonumber \sum_{j=2}^{s-1} 2^{1-j}\zeta(j,s-j) & = (2^{1-s}-1)\bigl(\zeta(s-1,1)- 2\log(2)\zeta(s-1)\bigr) \\
& \quad +  (2^{2-s}-1)\zeta(s) + \sum_{n=0}^\infty \frac{H_n}{(2n+1)^{s-1}}. \label{hn}
\end{align}
\end{lem}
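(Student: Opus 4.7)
My plan is to mimic the elementary proof of \eqref{sum2} with the weight $2^j$ replaced by $2^{1-j}$. Writing $\zeta(j,s-j) = \sum_{m,\ell\ge 1}(m+\ell)^{-j}m^{j-s}$ and summing the geometric series in $j$ first is legitimate without any case analysis this time, since the ratio $m/(2(m+\ell))$ is bounded above by $1/2$. A direct calculation then reduces the left side of \eqref{hn} to $T_1-T_2$, where
\[ T_1 = \sum_{m,\ell\ge 1}\frac{1}{m^{s-2}(m+\ell)(m+2\ell)}, \qquad T_2 = 2^{2-s}\sum_{m,\ell\ge 1}\frac{1}{(m+\ell)^{s-1}(m+2\ell)}. \]

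The sum $T_2$ is the easier of the two: the substitution $k=m+\ell$ turns the inner $\ell$-sum into $\sum_{\ell=1}^{k-1}1/(k+\ell)=H_{2k-1}-H_k$, and using $H_{2k-1}=H_{2k}-1/(2k)$ gives $T_2 = 2^{2-s}\sum_{k\ge 1}(H_{2k}-H_k)/k^{s-1} - 2^{1-s}\zeta(s)$. For $T_1$, the partial fraction $\frac{1}{(m+\ell)(m+2\ell)} = \frac{1}{\ell}\bigl(\frac{1}{m+\ell}-\frac{1}{m+2\ell}\bigr)$ lets the inner sum over $\ell$ split as $H_m/m$ minus $\frac{2}{m}\sum_{\ell\ge 1}\bigl(\frac{1}{2\ell}-\frac{1}{m+2\ell}\bigr)$. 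The remaining tail must be handled according to the parity of $m$; for $m=2p$ an analogous telescoping yields $H_p/2$, whereas for $m=2p+1$ the expansion $O_N := \sum_{k=0}^{N-1}\frac{1}{2k+1} = \log 2 + \tfrac12\log N + \tfrac{\gamma}{2} + o(1)$ produces $O_{p+1}-\log 2$. Combining the two parities and using $H_{2p+1}=O_{p+1}+H_p/2$ yields
\[ T_1 = 2^{1-s}\bigl(P-\zeta(s-1,1)-\zeta(s)\bigr) + \tfrac12 Q - U + 2(1-2^{1-s})\log(2)\,\zeta(s-1), \]
where $P=\sum_{p\ge 1}H_{2p}/p^{s-1}$, $Q=\sum_{n\ge 0}H_n/(2n+1)^{s-1}$, and $U=\sum_{p\ge 0}O_{p+1}/(2p+1)^{s-1}$.

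The final step is to eliminate the auxiliary sums $P$ and $U$. Rewriting $O_{p+1}=H_{2p+1}-H_p/2$ gives $U=\sum_{q \text{ odd}}H_q/q^{s-1}-Q/2$, and splitting the classical identity $\sum_{q\ge 1}H_q/q^{s-1}=\zeta(s-1,1)+\zeta(s)$ into odd and even index contributions (the even piece being exactly $2^{1-s}P$) produces $U=\zeta(s-1,1)+\zeta(s)-2^{1-s}P-Q/2$. Substituting this into $T_1-T_2$, the $P$ terms cancel identically, and the remainder collapses to the right side of \eqref{hn}. The main obstacle is thus the parity-odd case of the inner sum in $T_1$, together with the bookkeeping that reveals the cancellation of $P$; beyond these, every step is partial fractions, telescoping, or routine algebra.
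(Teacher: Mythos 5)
Your proposal is correct, and it is precisely the elementary route the paper intends for this lemma (which it only sketches by saying ``similarly, using weight $\tfrac12$''): write the left side as a triple sum, sum the geometric series in $j$, then use partial fractions and telescoping, with the odd-denominator pieces producing the $\log(2)\zeta(s-1)$ and harmonic-number terms. I checked the decomposition into $T_1-T_2$, the parity analysis (including the limit $O_{p+1}-\log 2$ for odd $m$), and the elimination of $P$ and $U$ via $\sum_q H_q/q^{s-1}=\zeta(s-1,1)+\zeta(s)$; everything collapses exactly to the stated right-hand side.
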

Therefore, we may produce evaluations such as
\begin{align*}
\sum_{n=0}^\infty \frac{H_n}{(2n+1)^4} & = \frac{372\zeta(5)-21\pi^2\zeta(3)-2\pi^4\log(2)}{96}, \\
\sum_{n=0}^\infty \frac{H_n}{(2n+1)^5}  & = \frac{\pi^6-294\zeta(3)^2-744\log(2)\zeta(5)}{384}.
\end{align*}
Indeed, in \eqref{hn} the harmonic number sum relates to the functions $[2a,1]$ and $[2a,2a]$ in \cite{bzb}, and when $s$  is odd, we use their closed forms to simplify \eqref{hn}:
\begin{align} \nonumber
  \sum_{j=2}^{2s} 2^{1-j} \zeta(j,2s+1-j) & = (s-1+2^{1-2s})\zeta(2s+1) \\
& \quad - \sum_{k=1}^{s-1} (4^{-k}-4^{-s})(4^k-2) \zeta(2k) \zeta(2s+1-2k).  
\end{align}
On the other hand, if we chose even $s$ in \eqref{hn}, then $[2a,1]$, $[2a,2a]$ seem not to simplify in terms of more basic constants, though below we manage to find a closed form for their difference  (the proof is more technical than other results in this section). Combined with \eqref{hn}, we have
\begin{theorem} \label{thmhn} For integer $s \ge 2$,
\begin{align}  
\sum_{n=0}^\infty \frac{H_n}{(2n+1)^{2s-1}} & = (1-4^{-s}) (2s-1)\zeta(2s) - (2-4^{1-s}) \log(2) \zeta(2s-1) \nonumber \\
 & \quad + (1-2^{-s})^2 \zeta(s)^2  - \sum_{k=2}^s 2(1-2^{-k})(1-2^{k-2s}) \zeta(k)\zeta(2s-k); \\
\sum_{j=2}^{2s-1} 2^{1-j}\zeta(j,2s-j) & = \frac{1}{2}(1-2^{1-s})^2 \zeta(s)^2 + \frac{1}{2}(2^{3-2s}+2s-3) \zeta(2s)  \nonumber \\
 & \quad  - \sum_{k=2}^s (2^{k-1}-1)(2^{1-k}-4^{1-s})\zeta(k)\zeta(2s-k).
\end{align}
\end{theorem}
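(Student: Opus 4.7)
The two identities in the theorem are essentially equivalent: substituting $s\mapsto 2s$ in \eqref{hn} rewrites $\sum_{j=2}^{2s-1}2^{1-j}\zeta(j,2s-j)$ as a linear combination of the harmonic sum $T_s:=\sum_{n\ge 0}H_n/(2n+1)^{2s-1}$ together with $\zeta(2s-1,1)$, $\log(2)\zeta(2s-1)$ and $\zeta(2s)$. Since $\zeta(2s-1,1)$ has a classical closed form via \eqref{easy} and Euler's evaluation of $\sum H_n/n^{2s-1}$, it suffices to prove the first formula, which evaluates $T_s$.

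I plan to use a Mellin-type integral representation. Combining $(2n+1)^{-(2s-1)}=\tfrac{1}{(2s-2)!}\int_0^1 x^{2n}(-\log x)^{2s-2}\,dx$ with the generating function $\sum_{n\ge 0}H_n x^{2n}=-\log(1-x^2)/(1-x^2)$ gives
\[T_s=-\frac{1}{(2s-2)!}\int_0^1\frac{(\log x)^{2s-2}\log(1-x^2)}{1-x^2}\,dx.\]
Splitting $\log(1-x^2)=\log(1-x)+\log(1+x)$ and $(1-x^2)^{-1}=\tfrac12[(1-x)^{-1}+(1+x)^{-1}]$ produces four standard integrals $J_1,\ldots,J_4$ involving $\log(1\pm x)/(1\mp x)$. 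Each $J_i$ unfolds, by term-by-term series expansion, into an Euler-type double sum of total weight $2s$, expressible through the character sums of \cite{bzb}.

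The main obstacle is exactly the one flagged in the excerpt: at the even weight $2s$, the character sums $[2a,1]$ and $[2a,2a]$ are not individually reducible to products of Riemann zeta values, so the proof hinges on producing a closed form for their difference. I would attempt this by pairing the two offending integrals $J_2$ and $J_3$ and applying the involution $x\mapsto(1-x)/(1+x)$, which maps $\log(1-x)$ to $\log 2+\log x-\log(1+x)$ and induces the sort of antisymmetric cancellation already seen in the proof of \eqref{sum2}; alternatively, a stuffle/shuffle identity relating weight-$2s$ alternating double zeta values should supply the required difference directly. Once $T_s$ is expressed purely in terms of $\zeta(2s)$, $\log(2)\zeta(2s-1)$ and the products $\zeta(k)\zeta(2s-k)$ (including the diagonal $\zeta(s)^2$), substitution into \eqref{hn} and collection of terms yields both claimed formulas; a numerical check at $s=2$ would confirm the coefficients.
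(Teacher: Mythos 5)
Your setup is sound and matches the paper's: the reduction to the first identity via \eqref{hn} (with $2s$ in place of $s$, using the classical evaluation of $\zeta(2s-1,1)$) is exactly how the second formula is obtained, and your integral representation
\[T_s=-\frac{1}{(2s-2)!}\int_0^1\frac{(\log x)^{2s-2}\log(1-x^2)}{1-x^2}\,dx\]
is the same one the paper derives from the Mellin-transform results of \cite{bzb}. The problem is that everything after this point --- which is the actual substance of the theorem --- is left as a speculative plan rather than a proof. The paper itself stresses that at even weight the individual character sums $[2a,1]$ and $[2a,2a]$ do \emph{not} reduce to standard constants and that only their difference does; so splitting the integral into $J_1,\dots,J_4$ and expanding term-by-term just reproduces the irreducible pieces, and an appeal to generic stuffle/shuffle relations cannot be expected to ``supply the required difference directly'' --- if it could, the evaluation would not be the technical point the paper flags it as. Your other suggestion, the involution $x\mapsto(1-x)/(1+x)$, does not behave as you hope: it sends $\log x$ to $\log(1-x)-\log(1+x)$, so the factor $(\log x)^{2s-2}$ explodes into a full binomial expansion of mixed logarithms, and there is no antisymmetry of the kind that made the proof of \eqref{sum2} work. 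Neither alternative is verified, and neither visibly closes the gap.

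For comparison, the paper's route is quite different at this crucial stage: it packages all weights at once into a generating function $F(w)=\sum_{s\ge2}T_s\,w^{2s-2}$, performs the sum over $s$ under the integral sign, writes $\log(1-x^2)/(x^2-1)$ as $-\tfrac{\mathrm{d}}{\mathrm{d}q}(1-x^2)^{q-1}\big|_{q=0}$ so that the integral becomes a $q$-derivative of Beta integrals, evaluates these in Gamma functions, and then extracts the coefficient of $w^{2s-2}$ using the series expansions of $\psi\bigl(\tfrac{1\pm w}{2}\bigr)$ and $\tfrac{\pi^2}{2}\sec^2\bigl(\tfrac{\pi w}{2}\bigr)$. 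That coefficient extraction is precisely what produces the $(1-4^{-s})(2s-1)\zeta(2s)$, $\log(2)\zeta(2s-1)$ and $\zeta(k)\zeta(2s-k)$ terms with the stated coefficients. To complete your argument you would need either to carry out an analogous closed-form evaluation of your four integrals' problematic combination, or to adopt a device like the paper's generating-function/Beta-integral computation; as written, the key evaluation is asserted, not proven, and a numerical check at $s=2$ cannot substitute for it.
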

\begin{proof}
We only need to prove the first equality as the second follows from \eqref{hn}; to achieve this we borrow techniques from \cite{bzb}.

Using the fact that the harmonic number sum is $2([2a,1](2s-1,t)-[2a,2a](2s-1,t))$ in the notation of \cite{bzb}, we use the results therein (obtained using Mellin transforms)  to write down its integral equivalent:
\[ \sum_{n=0}^\infty \frac{H_n}{(2n+1)^{2s-1}} = \int_0^1 \frac{\log(x)^{2s-2}\log(1-x^2)}{\Gamma(2s-1)(x^2-1)} \,\mathrm{d}x. \]
We denote its generating function by $F(w)$, and after interchanging orders of summation and integration, we obtain
\begin{align*}
 F(w) & := \sum_{s=2}^\infty \biggl[ \int_0^1 \frac{\log(x)^{2s-2}\log(1-x^2)}{\Gamma(2s-1)(x^2-1)} \,\mathrm{d}x \biggr] w^{2s-2} \\
 & = \int_0^1 \frac{x^{-w}(x^w-1)^2 \log(1-x^2)}{2(x^2-1)} \,\mathrm{d}x \\
 & = -\frac{1}{2} \int_0^1 \frac{\mathrm{d}}{\mathrm{d}q}\Bigl[x^{-w}(x^w-1)^2(1-x^2)^{q-1}\Bigr]_{q=0} \,\mathrm{d}x.
\end{align*}
Next, we interchange the order of differentiation and integration; the result is a Beta integral which evaluates to:
\begin{align*}
 F(w) & = \frac{1}{4} \frac{\mathrm{d}}{\mathrm{d}q}\biggl[ \frac{2 \Gamma(1/2) \Gamma(q)}{\Gamma(q+1/2)}-\frac{\Gamma((1-w)/2)\Gamma(q)}{\Gamma((1-w)/2+q)}-\frac{\Gamma((1+w)/2)\Gamma(q)}{\Gamma((1+w)/2+q)} \biggr]_{q=0} \\
 & = \frac{1}{8}\biggl[ 8\log(2)^2-\pi^2+\pi^2 \sec^2\Bigl(\frac{\pi w}{2}\Bigr) - \Bigl[\psi\Bigl(\frac{1-w}{2}\Bigr)+\gamma\Bigr]^2 - \Bigl[\psi\Bigl(\frac{1+w}{2}\Bigr)+\gamma\Bigr]^2 \biggr],
\end{align*} 
where $\psi$ denotes the digamma function and $\gamma$ is the Euler-Mascheroni constant.  The desired equality follows using the series expansions
\begin{align*}
-\psi\Bigl(\frac{1-w}{2}\Bigr) & = \gamma+2\log(2) + \sum_{k=1}^\infty (2-2^{-k}) \zeta(k+1)w^k, \\
\frac{\pi^2}{2} \sec^2\Bigl(\frac{\pi w}{2}\Bigr) & = \sum_{k=0}^\infty (4-4^{-k})(2k+1)\zeta(2k+2)w^{2k}. \qquad \qed
\end{align*}
\end{proof}

\begin{remark}
Thus Theorem \ref{thmhn}, together with \cite{bzb}, completes the evaluation of \[\sum_{n=0}^\infty \frac{H_n}{(2n+1)^s}\] in terms of well known constants for integer $s \ge 2$. In \cite[theorem 6.5]{apostol} it is claimed that said sum may be evaluated in terms of Riemann zeta values alone, but the claim is unsubstantiated by numerical checks and notably the constant $\log(2)$ is missing from the purported evaluation. \qed
\end{remark}

\subsection{Alternating Double Zeta Values}

The \textit{alternating} double zeta values $\zeta(a, \overline{b})$ are defined as
\[ \zeta(a,\overline{b}) = \sum_{n=1}^\infty \sum_{m=1}^{n-1} \frac{1}{n^a}\frac{(-1)^{m-1}}{m^b}, \]
with $\zeta(\overline{a},b)$ and $\zeta(\overline{a}, \overline{b})$ defined similarly (the bar indicates the position of the $-1$). In \cite{bzb}, explicit evaluations of $\zeta(s,\overline{1}),\zeta(\overline{2s},1)$ and $\zeta(\overline{2s},\overline{1})$ are given in terms of Riemann zeta values and $\log(2)$; in \cite{explicit}, it is shown that $\zeta(a,\overline{b})$ etc with $a+b$ odd may be likewise reduced; small examples include (see also \cite{goldbach} for the first one)
 \[ \zeta(\overline{2},1) = -\frac{\zeta(3)}{8}, \quad \zeta(2, \overline{1}) = \frac{\pi^2 \log(2)}{4} - \zeta(3), \quad \zeta(\overline{2},\overline{1}) = \frac{\pi^2 \log(2)}{4}-\frac{13\zeta(3)}{8}. \]

Again, if we follow closely the proof of \eqref{sum1}, we arrive at new summation formulae such as
\begin{equation}
\sum_{j=2}^{s-1} \zeta(j,\overline{s-j}) = (1-2^{1-s})\zeta(s)+\zeta(\overline{s-1},1)+\zeta(\overline{s-1},\overline{1}),
\end{equation}
and so on. When $s$ is odd, we simplify the right hand side using results in \cite{bzb}, thus:
\begin{align*}
\sum_{j=2}^{2s} \zeta(j,\overline{2s+1-j}) & = 2(1-4^{-s})\log(2)\zeta(2s)-\zeta(2s+1) \\
& \quad +\sum_{k=1}^{s-1} (2^{1-2k}-4^{k-s})\zeta(2k)\zeta(2s+1-2k), \\
\sum_{j=2}^{2s} (-1)^j \zeta(j,\overline{2s+1-j}) & = 2(1-4^{-s})\log(2)\zeta(2s) + \Bigl((s+1)4^{-s}-\frac12-s\Bigr)\zeta(2s+1) \\
& \quad + \sum_{k=1}^{s-1} (1-4^{k-s})\zeta(2k)\zeta(2s+1-2k).
\end{align*}
The last two formulae may be added or subtracted to give sums for even or odd $j$'s, for instance
\begin{equation} \sum_{j=1}^{s-1} \zeta(2j+1,\overline{2s-2j}) = \Bigl(\frac{2s-1}{4}-\frac{s+1}{2^{2s+1}}\Bigr)\zeta(2s+1)-\sum_{k=1}^{s-1} \Bigl(\frac12-\frac{1}{4^k}\Bigr)\zeta(2k)\zeta(2s+1-2k). 
\end{equation}

With perseverance, we may produce a host of similar identities for the three alternating double zeta functions. We only give some examples below; as they have similar proofs, we omit the details. 

When we evaluate $\sum_{j=2}^{s-1} (\pm 1)^j \zeta(\overline{j},s-j)$, we get for example
\begin{align*} \sum_{j=2}^{s-1}\zeta(\bar{j},s-j) &= (1-2^{1-s})\bigl(\zeta(s)+\zeta(s-1,1)-2\log(2)\zeta(s-1)\bigr) \\
& \quad -\zeta(\overline{s-1},1) -\sum_{n=0}^\infty \frac{H_n}{(2n+1)^{s-1}},
\end{align*}
and by applying \eqref{hn} to the results, we obtain
\begin{align}
 \sum_{j=2}^{2s} \bigl(2^{1-j}\zeta(j,2s+1-j) + \zeta(\overline{j},2s+1-j)\bigr) & = 4^{-s}\zeta(2s+1) - \zeta(\overline{2s},1), \\
2 \sum_{j=1}^s \zeta(\overline{2j},2s+1-2j) & = \frac{4^{-s}-1}{2}\zeta(2s+1)-\zeta(\overline{2s},1),
\end{align}
where the right hand side of both equations may be reduced to Riemann zeta values by results in \cite{bzb}.

Likewise, for $\zeta(\overline{j},\overline{s-j})$ we may deduce
\begin{align} \nonumber
\frac{1}{2} \sum_{j=2}^{2s} \zeta(\overline{j},\overline{2s+1-j}) & = (1-4^{-s})\log(2)\zeta(2s)-\frac{2s(2^{2s+1}-1)-1}{4^{s+1}}\zeta(2s+1) \\
& \quad +\sum_{k=1}^{s-1}(4^k-1)(4^{-k}-4^{-s})\zeta(2k)\zeta(2s+1-2k); \\ \nonumber
\sum_{j=1}^{s-1} \zeta(\overline{2j+1},\overline{2s-2j}) & = \Bigl(\frac12 (4^s-3s-2)+4^{-s}(s+1)\Bigr)\zeta(2s+1) \\
& \quad - \sum_{k=1}^{s-1} 4^{-(s+k)}(4^s-4^k)^2 \zeta(2k)\zeta(2s+1-2k). 
\end{align}

Therefore, for the sums of the three alternating double zeta values, we have succeeded in giving closed forms when $s$ (the sum of the arguments) is odd and the summation index $j$ is odd, even, or unrestricted; it is interesting to compare this to the non-alternating case, whose sum is simpler when $s$ is even (see Remark \ref{r1}). A notable exception is the following formula, whose proof is similar to that of \eqref{sum3}:
\begin{theorem}  \label{thmmm} For integer $s \ge 2$, 
\begin{equation}
4\sum_{j=1}^{s-1} \zeta(\overline{2j},\overline{2s-2j}) = (4^{1-s}-1)\zeta(2s). 
\end{equation}
\end{theorem}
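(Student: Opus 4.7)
The plan is to follow the proof of \eqref{sum3} step by step in structure, with alternating signs and a pair of boundary terms as the only new complications. Writing each $\zeta(\overline{2j},\overline{2s-2j}) = \sum_{m,k>0} (-1)^k/((m+k)^{2j} m^{2s-2j})$ after the substitution $n = m+k$ (which collapses $(-1)^{n-1}(-1)^{m-1}$ to $(-1)^k$), and then summing the truncated geometric progression in $j$ first, the left-hand side becomes
\[ 4 \sum_{j=1}^{s-1} \zeta(\overline{2j},\overline{2s-2j}) = 4\sum_{m,k>0} \frac{(-1)^k}{k(2m+k)} \biggl[\frac{1}{m^{2s-2}} - \frac{1}{(m+k)^{2s-2}}\biggr]. \]

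I split via the partial fraction $1/(k(2m+k)) = (1/(2m))(1/k - 1/(2m+k))$. For the piece carrying $1/m^{2s-2}$, I sum over $k$ first: the shift $j = 2m+k$ turns $\sum_{k\ge 1}(-1)^k/(2m+k)$ into a tail of $\sum_{j\ge 1}(-1)^j/j$, and combined with $\sum_{j=1}^{2m}(-1)^j/j = H_m - H_{2m}$, the inner sum evaluates to $H_m - H_{2m}$; this piece therefore contributes $\frac{1}{2}\sum_m(H_m-H_{2m})/m^{2s-1}$. For the piece carrying $1/(m+k)^{2s-2}$, I substitute $n = m+k$, apply the partial fraction $1/((n-m)(n+m)) = (1/(2n))(1/(n-m)+1/(n+m))$, and reindex the two inner finite sums via $l = n-m$ and $j = 2n-l$; they combine into $\sum_{j=1}^{2n-1}(-1)^j/j$ minus the isolated middle term $(-1)^n/n$, giving $H_n - H_{2n} - 1/(2n) - (-1)^n/n$. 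This piece then equals $\frac{1}{2}\sum_n(H_n-H_{2n})/n^{2s-1} - \frac{1}{4}\zeta(2s) + \frac{1}{2}\eta(2s)$.

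The crucial phenomenon is that the two copies of $\frac{1}{2}\sum(H_n - H_{2n})/n^{2s-1}$ cancel when we take the difference of the two pieces, so no non-elementary harmonic number evaluation is required. What survives is exactly $\frac{1}{4}\zeta(2s) - \frac{1}{2}\eta(2s)$, and multiplying by $4$ and expanding $\eta(2s) = (1-2^{1-2s})\zeta(2s)$ yields $\zeta(2s) - 2\eta(2s) = (4^{1-s}-1)\zeta(2s)$, as claimed. The main obstacle is clean bookkeeping of the two small boundary contributions: the $-1/(2n)$ from the parity mismatch $\sum_{j=1}^{2n-1}$ versus $\sum_{j=1}^{2n}$, and the isolated diagonal $(-1)^n/n$, each come with a precarious sign and prefactor that must conspire to produce exactly the stated $(4^{1-s}-1)\zeta(2s)$.
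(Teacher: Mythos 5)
Your proposal is correct and takes essentially the same route the paper intends for Theorem \ref{thmmm}, which it only sketches as ``similar to that of \eqref{sum3}'': write the sum as a double series, perform the finite geometric sum in $j$ first, apply partial fractions, and let the non-elementary pieces cancel. I verified your bookkeeping: the two contributions $\tfrac12\sum_n (H_n-H_{2n})/n^{2s-1}$ do cancel, and the surviving terms are exactly $\tfrac14\zeta(2s)-\tfrac12\eta(2s)$, which upon multiplication by $4$ gives $(4^{1-s}-1)\zeta(2s)$ as claimed.
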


\begin{remark}
Though it is believed that $\zeta(\overline{s},1)$ and $\zeta(\overline{s},\overline{1})$  cannot be simplified in terms of well known constants for odd $s$, their difference can (this situation is analogous to Theorem \ref{thmhn} and can be proven using the same method):
\[ \zeta(\overline{s},1)  - \zeta(\overline{s},\overline{1}) = (1-2^{-s})\bigl(s\zeta(s+1)-2\log(2)\zeta(s)\bigr) - \sum_{k=1}^{s-2}(1-2^{-k})\zeta(k+1)\zeta(s-k). \]

Moreover, some of the sums involving $\zeta(\overline{s},1)$ and $\zeta(\overline{s},\overline{1})$ are much neater when the summation index $j$ starts from 1 instead of 2, for instance
\begin{align*}
\sum_{j=1}^{s-1} \zeta(\overline{j},s-j) & = (2^{2-s}-1)\log(2)\zeta(s-1)-\zeta(\overline{s-1},1), \\
\sum_{j=1}^{s-1} \zeta(\overline{j},\overline{s-j}) & = \zeta(s-1,\overline{1})-\log(2)\zeta(s-1), \\
2\sum_{j=1}^{2s} (-1)^j \zeta(\overline{j},2s+1-j) & = (2-4^{1-s})\log(2)\zeta(2s)-(1 - 4^{-s})\zeta(2s+1). \quad \qed
\end{align*}
\end{remark}

We wrap up this section with a surprising result, an alternating analog of \eqref{sum2}:
\begin{theorem} For integer $s \ge 3$,
\begin{equation} \label{zmp}
\sum_{j=2}^{s-1} 2^j \zeta(\overline{j},s-j) = (3-2^{2-s}-s) \zeta(s).
\end{equation}
\end{theorem}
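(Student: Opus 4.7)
The plan is to emulate closely the elementary proof of \eqref{sum2}. Writing $\zeta(\overline{j},s-j) = \sum_{n>m\ge1}(-1)^{n-1}/(n^j m^{s-j})$ and substituting $n=m+k$ with $k\ge 1$, I would interchange the (finite) sum in $j$ with the sums over $m,k$ and perform the geometric sum in $j$ explicitly. Setting $r = 2m/(m+k)$, the case $r=1$ (i.e.\ $k=m$) must be separated: it yields $(s-2)\sum_{m\ge 1}(-1)^{2m-1}/m^s = -(s-2)\zeta(s)$.

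For $r\ne 1$, the geometric sum simplifies to
\[
\frac{1}{m^s}\cdot\frac{r^2-r^s}{1-r} = \frac{4}{m^{s-2}(k^2-m^2)} - \frac{2^s}{(m+k)^{s-1}(k-m)},
\]
so the LHS of \eqref{zmp} decomposes, aside from the $k=m$ contribution, into two pieces weighted by $(-1)^{m+k-1}$. The second piece vanishes by antisymmetry exactly as in the proof of \eqref{sum2}: setting $n=m+k$ and $d=k-m$, the sign is $(-1)^{n-1}$ (independent of $d$) and the inner sum of $1/d$ runs over nonzero $d$ with $|d|<n$ and $d\equiv n\pmod 2$, which is antisymmetric in $d$.

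For the first piece, I would apply the partial fraction $1/(k^2-m^2) = [1/(k-m)-1/(k+m)]/(2m)$ to reduce matters to computing
\[
T_m := \sum_{\substack{k\ge 1\\ k\ne m}}\frac{(-1)^k}{m^2-k^2}.
\]
The claim is that $T_m = -1/(2m^2) - (-1)^m/(4m^2)$. This can be obtained elementarily by summing up to $k=N$, relabelling the two halves of the partial-fraction decomposition via $j=m+k$ and $j=m-k$ respectively, and observing that as $N\to\infty$ the two alternating tails each contribute $-\log 2$ and cancel, leaving only boundary terms from the excluded index $k=m$ and from $\sum_{j=1}^m (-1)^j/j$ vs.\ $\sum_{j=1}^{m-1}(-1)^j/j$. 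Substituting $T_m$ back and using $\eta(s)=(1-2^{1-s})\zeta(s)$, the first piece evaluates to $2\eta(s) - \zeta(s) = (1-2^{2-s})\zeta(s)$, and combining with the $k=m$ contribution gives $(3-s-2^{2-s})\zeta(s)$.

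The main obstacle is the evaluation of $T_m$: since the series is only conditionally convergent, one must be careful to truncate at a common cutoff $k\le N$, track which tails limit to $\pm\log 2$, and correctly handle the excluded term at $k=m$. Everything else is a direct adaptation of the techniques already used for \eqref{sum2} and \eqref{sum3}.
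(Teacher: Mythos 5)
Your proposal is correct and follows essentially the same route as the paper's proof: separate the diagonal $k=m$ term (giving $-(s-2)\zeta(s)$), sum the geometric series, kill one piece by antisymmetry, and evaluate the other via the alternating telescoping identity — your $T_m = -\bigl(2+(-1)^m\bigr)/(4m^2)$ is exactly the paper's lemma $\sum_{m\ne n}(-1)^m/(m^2-n^2) = \bigl(2+(-1)^n\bigr)/(4n^2)$, leading to $2\eta(s)-\zeta(s)$ and the stated total. Your extra care with the common cutoff for the conditionally convergent partial-fraction halves is a welcome detail the paper leaves implicit.
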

\begin{proof}
The proof is very similar to that of \eqref{sum2}: we write the left hand side as a triple sum and first take care of the $m=n$ case. Then we sum the geometric series to obtain
\[ \sum_{\stackrel{m,n>0}{m \ne n}} \frac{(-1)^{m+n}2^s}{(m-n)(m+n)^{s-1}}-\frac{4(-1)^{m+n}}{(m-n)(m+n)n^{s-2}}. \]
The first term vanishes due to antisymmetry, and the second term telescopes due to
\[ \sum_{\stackrel{m>0}{m \ne n}} \frac{(-1)^m}{m^2-n^2} = \frac{2+(-1)^n}{4n^2}. \]
Now summing over $n$ proves the result. \qed
\end{proof}
With \eqref{zmp} and results in \cite{bzb}, we can evaluate $\zeta(\overline{a},b)$ etc with $a+b=4$, for instance
\[ \zeta(\overline{2},2) = \frac{\log(2)^4}{6}-\frac{\log(2)^2 \pi^2}{6} +\frac{7\log(2)\zeta(3)}{2}-\frac{13\pi^4}{288} + 4 \mathrm{Li}_4\Bigl(\frac12\Bigr), \]
where $\mathrm{Li}_4$ is the polylogarithm of order 4.

\section{More Sums from Recursions} \label{sec4}

In this section we first provide some experimental evidence which suggests that the sums in Section \ref{sec2} (almost) exhaust all `simple' and `nice' sums in some sense. We then use a simple procedure which may be used to produce more weighted sums of greater complexity but of less elegance.

\subsection{Experimental Methods}

It is a curiosity why \eqref{sum2} had not been observed empirically earlier. As we can express all $\zeta(a,b)$ with $a+b \le 7$ in terms of the Riemann zeta function, it is a simple matter of experimentation to try all combinations of the form
\begin{equation} \label{exp} \sum_j (a \cdot b^j + c^s \cdot d^j) \zeta(j,s-j) = f(s) \zeta(s), \end{equation}
with $j$ or $s$ being even, odd or any integer (so there are 9 possibilities), $a, b, c, d \in \mathbb{Q}$, and $f: \mathbb{N} \to \mathbb{Q}$ is a (reasonable) function to be found. 

Now if we assume that $\pi, \zeta(3), \zeta(5), \zeta(7), \ldots$ are algebraically independent over $\mathbb{Q}$ (which is widely believed to be true, though proof-wise we are a long way off, for instance, apart from $\pi$ only $\zeta(3)$ is known to be irrational -- see \cite{alf}), then we can substitute a few small values of $s$ into \eqref{exp} and solve for $a, b, c, d $ in that order.

For instance, assuming a formula of the form $\sum_{j=2}^{s-1} a^j \zeta(j,s-j) = f(s) \zeta(s)$ holds, using $s=5$ forces us to conclude that $a=1$ or $a=2$.

Indeed, when we carry out the experiment outlined above, it is revealed that the sums \eqref{sum1}, \eqref{sum2}, \eqref{sum3} are essentially the only ones in the form of \eqref{exp}, except for the case
\[ \sum_{j=1}^{s-1} (d^j + d^{s-j})\zeta(2j,2s-2j), \]
(note the factor in front of $\zeta$ has to be invariant under $j \mapsto s-j$). Here, the choice of $d=4$ leads 
\[ \sum_{j=1}^{s-1} (4^j+4^{s-j})\zeta(2j,2s-2j) =  \Bigl(s+\frac43+\frac23 4^{s-1}\Bigr)\zeta(2s),\]
a result which first appeared in \cite{nakamura} and was proven using the generating function of Bernoulli polynomials. \\

Sums of the form 
\[ \sum_j p(s,j) \zeta(j,s-j) = f(s) \zeta(s), \]
where $p$ is a non-constant 2-variable polynomial with rational coefficients, can also be subject to experimentation. If the degree of $p$ is restricted to 2, then $j(s-j) \zeta(2j, 2s-2j)$ is the only candidate which can give a closed form. Indeed, this sum was essentially considered in \cite{nakamura}, using an identity found in \cite{shimura}: 
\[ 6 \sum_{j=2}^{s-2}(2j-1)(2s-2j-1)\zeta(2j)\zeta(2s-2j) = (s-3)(4s^2-1)\zeta(2s).\] 
The identity was first due to Ramanujan \cite[chapter 15, formula (14.2)]{rama} (and hence not original as claimed in \cite{shimura}). Applying \eqref{easy}, the result can be neatly written as
\begin{equation} \label{naka}
 \sum_{j=2}^{s-2} (2j-1)(2s-2j-1) \zeta(2j,2s-2j) = \frac{3}{4}(s-3)\zeta(2s).
\end{equation}

Searches for `simple' weighted sums of length 3 multiple zeta values, and for $q$-analogs of \eqref{sum2}, have so far proved unsuccessful (except for \cite{shencai} which contains a generalisation of \eqref{sum3}). \\

\subsection{Recursions of the Zeta Function} 

We observe that any recursion of the Riemann zeta values -- or of Bernoulli numbers -- of the form
\[ \sum_j  g(s,j) \zeta(2j)\zeta(2s-2j) \]
for some function $g$ would lead to a sum formula for double zeta values, due to \eqref{easy}. This was the idea behind \eqref{naka} and was also hinted at in Remark \ref{r1}. We flesh out the details in some examples below.

One such recursion is formula (14.14) in chapter 15 of \cite{rama}, which can be written as
\begin{align} \nonumber
& \quad \sum_{j=1}^{n-1} j(2j+1)(n-j)(2n-2j+1) \zeta(2j+2)\zeta(2n-2j+2)  \\
& = \frac{1}{60}(n+1)(2n+3)(2n+5)(2n^2-5n+12) \zeta(2n+4) - \frac{\pi^4}{15}(2n-1)\zeta(2n).
\end{align}
Upon applying \eqref{easy} to the recursion, we obtain the new sum:
\begin{theorem} For integer $n \ge 4$,
\begin{align} \nonumber 
& \quad \sum_{j=2}^{n-2} (j-1)(2j-1)(n-j-1)(2n-2j-1) \zeta(2j,2n-2j) \\
& = \frac{3}{8}(n-1)(3n-2)\zeta(2n)-3(2n-5)\zeta(4)\zeta(2n-4). 
\end{align}
\end{theorem}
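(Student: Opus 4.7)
The plan is to derive this formula by applying the identity \eqref{easy} to a reindexed form of the Ramanujan recursion quoted just above the statement. This mirrors the strategy already used to obtain \eqref{naka} from the recursion in \cite{shimura}, and is explicitly sanctioned by the remark that any recursion of the shape $\sum_j g(s,j)\zeta(2j)\zeta(2s-2j)$ yields a weighted double-zeta sum.

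First I would carry out the substitution $n \mapsto n-2$ and $j \mapsto j-1$ inside the Ramanujan recursion. A direct check shows the coefficient $j(2j+1)(n-j)(2n-2j+1)$ becomes exactly $(j-1)(2j-1)(n-j-1)(2n-2j-1)$, which is the weight appearing in the theorem, the summation range $1 \le j \le n-1$ becomes $2 \le j \le n-2$, and the factor $\zeta(2j+2)\zeta(2n-2j+2)$ becomes $\zeta(2j)\zeta(2n-2j)$. After the substitution, and after rewriting $\pi^4/15 = 6\zeta(4)$, the right hand side of Ramanujan's formula takes the form
\[
\tfrac{1}{60}(n-1)(2n-1)(2n+1)(2n^2-13n+30)\,\zeta(2n)\;-\;6(2n-5)\,\zeta(4)\zeta(2n-4).
\]

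Next I would apply \eqref{easy} in the form $\zeta(2j)\zeta(2n-2j)=\zeta(2j,2n-2j)+\zeta(2n-2j,2j)+\zeta(2n)$. The key structural observation is that the weight $a_j := (j-1)(2j-1)(n-j-1)(2n-2j-1)$ is invariant under the involution $j \mapsto n-j$. Consequently the change of variable $j \mapsto n-j$ in the $\zeta(2n-2j,2j)$ contributions identifies them with the $\zeta(2j,2n-2j)$ contributions, yielding
\[
\sum_{j=2}^{n-2} a_j\,\zeta(2j)\zeta(2n-2j) \;=\; 2\sum_{j=2}^{n-2} a_j\,\zeta(2j,2n-2j) \;+\; \Bigl(\sum_{j=2}^{n-2} a_j\Bigr)\zeta(2n).
\]
Solving for the target sum and equating with the reindexed Ramanujan right hand side reduces the theorem to the purely polynomial identity
\[
\tfrac{1}{120}(n-1)(2n-1)(2n+1)(2n^2-13n+30) \;-\; \tfrac{1}{2}\sum_{j=2}^{n-2} a_j \;=\; \tfrac{3}{8}(n-1)(3n-2).
\]

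The only remaining work is the evaluation of the scalar sum $\sum_{j=2}^{n-2} a_j$ in closed form. Since $a_j$ is a quartic polynomial in $j$, summation over the interval produces a quintic polynomial in $n$, which can be handled either by expanding $a_j$ and invoking the standard closed forms for $\sum j^p$, or, more cheaply, by observing that both sides of the displayed polynomial identity have bounded degree in $n$ and verifying it at a handful of small values (e.g.\ $n=4,5,6,7$). This is the main source of mechanical bookkeeping but is not conceptually the obstacle; the content of the proof lies in the reindexing of Ramanujan's recursion and the $j \mapsto n-j$ symmetry of $a_j$, both of which are immediate.
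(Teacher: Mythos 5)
Your proposal is correct and is exactly the paper's (very tersely stated) proof: shift Ramanujan's recursion by $n\mapsto n-2$, $j\mapsto j-1$, apply \eqref{easy}, use the $j\mapsto n-j$ symmetry of the weight, and absorb the resulting scalar sum $\sum_j a_j$ into the $\zeta(2n)$ coefficient. The only small caution is in the final polynomial identity: its left side has degree $5$ in $n$, so a numerical verification must use at least six values of $n$ (or simply do the Faulhaber-type summation you mention), but this does not affect the validity of the argument.
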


Next, we use a result from \cite{miki}, which states
\begin{equation} \label{b1}
 \sum_{k=1}^{n-1} \biggl[1-\binom{2n}{2k}\biggr] \frac{ B_{2k}B_{2n-2k}}{(2k)(2n-2k)}  = \frac{H_{2n}}{n} B_{2n}.
\end{equation}
We apply \eqref{easy} to the left hand side to obtain a sum of double zeta values; unfortunately one term of the sum involves $\sum_{k=1}^{n-1} (2k-1)!(2n-2k-1)!$ which has no nice closed form. On the other hand, a twin result in \cite{yuri} gives
\begin{equation} \label{b2}
 \sum_{k=1}^{n-2} \biggl[ n-\binom{2n}{2k} \biggr] B_{2k}B_{2n-2k-2} = (n-1)(2n-1)B_{2n-2}. 
\end{equation}
When we apply \eqref{easy} to it, we end up with a sum involving $\sum_{k=1}^{n-2} (2k)!(2n-2k-2)!$, which again has no nice closed form.

Yet, it is straight-forward to show by induction that
\[ \sum_{k=0}^m \frac{(-1)^k}{\binom{n}{k}} = \frac{(n+1)!+(-1)^m (m+1)!(n-m)!}{(n+2)n!}, \]
hence, when $m=n$, the sum vanishes if $n$ is odd and equates to $2(n+1)/(n+2)$ when $n$ is even. In other words, $\sum_{k=0}^{2n} (-1)^k k!(2n-k)!$ has a closed form, and accordingly we subtract the sums obtained from \eqref{b1} and \eqref{b2} to produce:
\begin{prop} For integer $n \ge 2$,
\begin{align} \nonumber
& \quad \sum_{k=1}^{n-1} \biggl\{ \biggl[1-\frac{1}{\binom{2n+2}{2k}}\biggr]\frac{n+1}{k(n+1-k)} \zeta(2k,2n+2-2k) +  \frac{\zeta(2n+2)}{\zeta(2n)} \times \\ 
\nonumber & \quad \biggl[\frac{2}{(2n+1)\binom{2n}{2k}} - \frac{(2k-n)^2+(n+1)(n+2)}{(n-k+1)(2n-2k+1)(k+1)(2k+1)} \biggr]\zeta(2k,2n-2k) \biggr\}\\
& = 3 \biggl[ H_{2n-1}-H_{n-1}-\frac{2n^2+n+1}{2n(2n+1)} \biggr] \zeta(2n+2) - \frac{2n+3}{2n+1}\zeta(2n,2).
\end{align}
\end{prop}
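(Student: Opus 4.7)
The plan is to apply both Bernoulli-number identities \eqref{b1} and \eqref{b2} at index $n+1$, convert each Bernoulli number to a Riemann zeta value via $B_{2m}=2(-1)^{m+1}(2m)!\zeta(2m)/(2\pi)^{2m}$, apply \eqref{easy} to each product $\zeta(2k)\zeta(\cdot)$, and then combine the two resulting equations so that the residual factorial sums $S_1:=\sum_{k=1}^n (2k-1)!(2n+1-2k)!$ and $S_2:=\sum_{k=1}^{n-1}(2k)!(2n-2k)!$, neither closed individually, merge into a closed form.

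Processing \eqref{b1} at $n+1$, the coefficient $[1-\binom{2n+2}{2k}](2k-1)!(2n+1-2k)!$ is symmetric under $k\leftrightarrow n+1-k$, so \eqref{easy} collapses $\zeta(2k,2n+2-2k)+\zeta(2n+2-2k,2k)$ into $2\zeta(2k,2n+2-2k)$. I would peel off the $k=n$ summand, which contributes $\zeta(2n,2)$, and divide through by $-(2n+1)!$. A routine simplification using $\binom{2n+2}{2k}(2k-1)!(2n+1-2k)!=(2n+2)!/(2k(2n+2-2k))$ matches the coefficient of $\zeta(2k,2n+2-2k)$ to the one in the proposition, and $\binom{2n+2}{2n}=(n+1)(2n+1)$ produces $\tfrac{2n+3}{2n+1}$ for the $\zeta(2n,2)$ coefficient. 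The binomial piece of the $\zeta(2n+2)$ coefficient telescopes via partial fractions to $-H_n$, leaving the unwanted residual $S_1\zeta(2n+2)/(2n+1)!$.

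Processing \eqref{b2} at $n+1$ similarly, but now without coefficient symmetry, the coefficient of $\zeta(2k,2n-2k)$ after reindexing becomes a symmetric combination $(c_k+c_{n-k})(2k)!(2n-2k)!$; dividing by $(n+1)(2n+1)!$ and verifying the algebraic identity $(2k+1)(k+1)+(2n+1-2k)(n+1-k)=(2k-n)^2+(n+1)(n+2)$ matches the bracketed coefficient in the proposition. The closed-form part of the $\zeta(2n)$ coefficient yields $2H_{2n}-2H_n-1-n/(2(n+1))$ by partial fractions, leaving the residual $-S_2/(2n+1)!$ after normalization. I then multiply through by $\zeta(2n+2)/\zeta(2n)$ to promote everything to weight $2n+2$ and add to the equation from \eqref{b1}.

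The combined bad part is $(S_1-S_2)\zeta(2n+2)/(2n+1)!$. The key identity $\sum_{k=0}^{2n}(-1)^k k!(2n-k)!=(2n+1)!/(n+1)$ (from the displayed formula in the paper with $n\mapsto 2n$, $m\mapsto 2n$, multiplied by $(2n)!$) splits into even- and odd-indexed parts to give $S_2-S_1=(2n+1)!/(n+1)-2(2n)!$, hence $(S_1-S_2)/(2n+1)!=-\tfrac{1}{n+1}+\tfrac{2}{2n+1}$. Consolidating with the harmonic pieces using $H_{2n+2}=H_{2n}+\tfrac{1}{2n+1}+\tfrac{1}{2(n+1)}$ and $H_{2n-1}-H_{n-1}=H_{2n}-H_n+\tfrac{1}{2n}$ collapses the $\zeta(2n+2)$ coefficient to the advertised $3[H_{2n-1}-H_{n-1}-(2n^2+n+1)/(2n(2n+1))]$. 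The main obstacle is bookkeeping rather than insight: one must normalize the second equation by exactly $\frac{1}{(n+1)(2n+1)!}$ so that the coefficients of $S_1$ and $S_2$ are of equal magnitude and opposite sign, which is the only ratio that admits the closed-form identity above.
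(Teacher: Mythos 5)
Your proposal is correct and follows essentially the same route the paper sketches: apply Miki's identity \eqref{b1} and Matiyasevich's identity \eqref{b2} (at index $n+1$), convert to double zeta values via \eqref{easy}, and combine the two equations so that the intractable factorial sums merge into $\sum_{k=0}^{2n}(-1)^k k!(2n-k)!$, whose closed form completes the evaluation. Your normalizations, coefficient identities, and the harmonic-number bookkeeping all check out against the stated proposition.
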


Our next result uses equation (7.2) recored in \cite{dilcher}, whose special case gives:
\begin{equation} \label{ad} B_n^2 + \frac{B_{2n}}{\binom{2n}{n}} = \frac{4n\, n!}{(2\pi)^{2n}} \sum_{k=0}^{[n/2]} \frac{(2n-2k)!}{(n-k)(n-2k)!} \, \zeta(2k)\zeta(2n-2k).
\end{equation}
Upon applying \eqref{easy} and much algebra, we arrive at:
\begin{prop} For integer $n \ge 2$,
\begin{align*} \nonumber
 & \quad \sum_{k=1}^{n-1} \frac{(n+|n-2k|)!}{(n+|n-2k|)|n-2k|!} \, \zeta(2k,2n-2k) \\
 & = \biggl[\frac{(1 + (-2)^{1-n})(n-1)!}{4} + \frac{3(2n-1)!}{2n!} - \frac{(2n+1)!}{n(n+1)!} \,  _2F_1\biggl({{1,2n+2} \atop {n+2}} \biggl| -1\biggr) \biggr] \zeta(2n),
\end{align*}
where $_2F_1$ is the Gaussian hypergeometric function.
\end{prop}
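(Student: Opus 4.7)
The plan is to follow the same blueprint used for the preceding proposition, starting from identity \eqref{ad} and applying \eqref{easy} to convert products of Riemann zetas into double zeta values.

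First I would eliminate every Bernoulli number from \eqref{ad} using $B_{2m} = (-1)^{m+1}\,2(2m)!\,\zeta(2m)/(2\pi)^{2m}$ (and $B_n = 0$ for odd $n \ge 3$). Dividing through by the common factor $(2\pi)^{2n}/(4n\cdot n!)$, the identity becomes
\[
\tfrac{1+(-1)^n}{2}(n-1)!\,\zeta(n)^2 + \tfrac{(-1)^{n+1}(n-1)!}{2}\,\zeta(2n) = \sum_{k=0}^{[n/2]}c_k\,\zeta(2k)\zeta(2n-2k),
\]
where $c_k = (2n-2k)!/[(n-k)(n-2k)!]$.

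Next I would apply \eqref{easy} to every product of zetas. On the right, the $k=0$ term (using $\zeta(0) = -1/2$) contributes $-c_0\zeta(2n)/2$; each interior $1 \le k < n/2$ becomes $c_k[\zeta(2k,2n-2k)+\zeta(2n-2k,2k)+\zeta(2n)]$; when $n$ is even the central term $k=n/2$ gives $c_{n/2}[2\zeta(n,n)+\zeta(2n)]$; and the $\zeta(n)^2$ on the left (for $n$ even) is likewise replaced by $2\zeta(n,n)+\zeta(2n)$. Re-indexing $k \mapsto n-k$ in each $\zeta(2n-2k,2k)$ combines the paired double zetas into one sum over $k = 1,\ldots,n-1$. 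A direct verification shows $c_k/2 = \tilde c_k$ for $k \le n/2$ and $c_{n-k}/2 = \tilde c_k$ for $k \ge n/2$, where $\tilde c_k = (n+|n-2k|)!/[(n+|n-2k|)\,|n-2k|!]$, so the double zetas regroup with uniform coefficient $2\tilde c_k$ throughout.

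The final step is to solve for $\sum_{k=1}^{n-1}\tilde c_k\,\zeta(2k,2n-2k)$ by collecting every residual $\zeta(2n)$ contribution and dividing by the appropriate factor. The cumulative Bernoulli/parity piece condenses into $(1+(-2)^{1-n})(n-1)!/4$; the $k=0$ boundary produces the isolated factorial $3(2n-1)!/(2\,n!)$; and the finite sum of $c_k$'s shed off by the $[n/2]$ applications of \eqref{easy} on the right is what yields the hypergeometric piece --- after Euler's transformation ${}_2F_1(a,b;c;z) = (1-z)^{c-a-b}\,{}_2F_1(c-a,c-b;c;z)$ terminates the series at degree $n$, this sum matches $\frac{(2n+1)!}{n(n+1)!}\,{}_2F_1(1,2n+2;n+2;-1)$. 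The main obstacle is this final repackaging: recognising the hypergeometric closed form of the residual sum and isolating the parity-sensitive contribution cleanly; the rest is precisely the ``much algebra'' advertised in the statement.
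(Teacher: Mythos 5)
Your setup is sound and matches the paper's: converting \eqref{ad} to zeta values, applying \eqref{easy}, and checking $c_k/2=\tilde c_k$ (with $c_{n-k}/2=\tilde c_k$ for $k\ge n/2$) is exactly the ``much algebra'' the paper alludes to, and your bookkeeping of the $k=0$ term, the central term, and the parity of $n$ is correct. The gap is at the one step the paper itself identifies as the only non-trivial one: showing that the residual sum of shed coefficients, which after reindexing is essentially $\frac{n}{2}\sum_{i=\lceil n/2\rceil}^{n}\frac{1}{i}\binom{2i}{n}$, equals the claimed hypergeometric expression. You assert that after Euler's transformation (which indeed gives ${}_2F_1(1,2n+2;n+2;-1)=2^{-n-1}{}_2F_1(n+1,-n;n+2;-1)$, a terminating series equal to $2^{-n-1}\sum_{j=0}^{n}\frac{n+1}{n+j+1}\binom{n}{j}$) ``this sum matches'' the residual sum. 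But these two finite sums are not visibly equal: one runs over half-range central-type binomials $\binom{2i}{n}/i$, the other over $\binom{n}{j}/(n+j+1)$, and equating them (up to the boundary and parity corrections) is a genuine binomial identity that still has to be proved. Nothing in your proposal supplies that proof; you have only restated the target in a different hypergeometric normal form.

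The paper closes exactly this gap by a two-step argument: first it identifies the right-hand side $\frac{1}{(1-x)^{n+1}}-x^{m}\binom{n+m}{n}\,{}_2F_1(1,n+m+1;m+1;x)$ with the partial sum $\sum_{k=0}^{m-1}x^k\binom{n+k}{k}$ (same recursion and initial values in $m$, then analytic continuation to $x=-1$, $m=n+1$); second it shows that this partial sum and the residual binomial sum both satisfy the recursion $4f(n)-2f(n-1)=3(-1)^n\binom{2n}{n}$ with $f(1)=-1$, the latter verified by Celine's (Zeilberger-type) method. To complete your proof you would need an equivalent of this step --- e.g.\ a WZ-style certificate, or an integral representation such as $\sum_{j}\binom{n}{j}\frac{n+1}{n+j+1}=(n+1)\int_0^1 t^n(1+t)^n\,\mathrm{d}t$ matched against a corresponding representation of $\sum_i\binom{2i}{n}/i$ --- rather than the bare claim that the two terminating sums coincide.
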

\begin{proof}
The only non-trivial step to check here is that the claimed $_2F_1$ is produced when we sum the fraction in \eqref{ad}; that is, we wish to prove the claim
\[  \sum_{k=0}^{[n/2]} \frac{(-1)^n n}{2(n-k)}\binom{2n-2k}{n-2k}= \frac{1}{(1-x)^{n+1}} - x^m \binom{n+m}{n} \, _2F_1\biggl({{1,n+m+1} \atop {m+1}} \biggl| x\biggr)\bigg|_{x=-1,m=n+1}. \]
We observe that for $x$ near the origin the right hand side is simply $\sum_{k=0}^{m-1} x^k \binom{n+k}{k}$, as they have the same recursion and initial values in $m$, hence when $x=-1$ they also agree by analytic continuation. This sum (in the limit $x=-1, \, m=n+1$), as a function of $n$, also satisfies the recursion
\[ 4f(n) - 2 f(n-1) = 3(-1)^n \binom{2n}{n}, \quad f(1) = -1, \]
which is the same recursion for the sum on the left hand side of the claim -- as may be checked using Celine's method \cite{ab}. Thus equality is established.
\qed
\end{proof}

\begin{remark}
It is clear that a large number of (uninteresting) identities similar to the those recorded in the two propositions may be easily produced. Using \cite[(7.2) with $k=n+1$]{dilcher}, for instance, a very similar proof to the above gives
\begin{align*} 
& \quad \sum_{k=1}^{n-1} \frac{4(n+1+|n-2k|)!}{n!(1+|n-2k|)!}\zeta(2k,2n-2k) = (1+(-1)^n)(1-n)\zeta(n)^2 + \zeta(2n) \times\\
& \biggl\{n+3+(-1)^n(n-1+2^{-n})+2 \binom{2n+1}{n}\biggl[3-\frac{4(2n+3)}{n+1}\, _2F_1\biggl({{1,2n+4} \atop {n+3}} \biggl| -1\biggr)\biggr]\biggr\}.
\end{align*}
Care must be exercised when consulting the literature, however, as the author found in the course of this work that many recorded recursions of the Bernoulli numbers (or of the even Riemann zeta values) are in fact combinations and reformulations of the formula behind \eqref{naka} and the basic identity appearing in Remark \ref{r1}.
\qed
\end{remark}

\subsection{The Reflection Formula}

Formula \eqref{easy} is but a special case of a more general \textit{reflection formula}. To state the reflection formula, we will need some notation from \cite{bzb}, which we have tried to avoid until now to keep the exposition elementary.

Let $\chi_p(n)$ denote a 4-periodic function on $n$; for different $p$'s we tabulate values of $\chi_p$ below:

\begin{center}
\begin{tabular}{|c||c|c|c|c|} \hline
$p \backslash n$ & $ \ 1 \ $ & $ \ 2 \ $ & $ \ 3 \ $ & $ \ 4 \ $ \\ \hline \hline
1 & 1 & 1 & 1 & 1 \\ \hline
$2a$ & 1 & 0 & 1 & 0 \\ \hline
$2b$ & 1 & $-1$ & 1 & $-1$ \\ \hline
$-4$ & 1 & 0 & $-1$ & 0 \\ \hline
\end{tabular}
\end{center}

We now define the series $L_p$ by
\[ L_p(s) = \sum_{n=1}^\infty \frac{\chi_p(n)}{n^s}, \]
and $L_{pq}(s)$ means $\sum_{n>0} \chi_p(n)\chi_q(n)/n^s$. Finally, we define \textit{character sums}, which generalise the double zeta values, by
\begin{equation} \label{gen}
[p,q](s,t) = \sum_{n=1}^\infty \sum_{m=1}^{n-1}\frac{\chi_p(n)}{n^s}\frac{\chi_q(m)}{m^t}.
\end{equation} 
In this notation, $\zeta(s,t) = [1,1](s,t), \, \zeta(s,\overline{t}) = [1,2b](s,t)$, etc. We can now state the reflection formula \cite[equation (1.7)]{bzb}:
\begin{equation} \label{reflection}
[p,q](s,t) + [q,p](t,s) = L_p(s)L_q(t) - L_{pq}(s+t).
\end{equation}

\begin{remark}  With the exception of $\chi_{2b}$, $\chi_p$ are examples of Dirichlet characters and $L_p$ are the corresponding Dirichlet series. Indeed, 
\begin{align*}  L_1(s)  =  \zeta(s), \quad & L_{2a}(s)   =  (1-2^{-s})\zeta(s) = \lambda(s), \\
 L_{2b}(s)   =   (1-2^{1-s})\zeta(s) = \eta(s), \quad & L_{-4}(s)  =  \beta(s),
\end{align*}
where the last three are the Dirichlet lambda, eta and beta functions respectively. 

Moreover, $2(2n)! \, \beta(2n+1) = (-1)^n (\pi/2)^{2n+1} E_{2n}$ for non-negative integer $n$, where $E_n$ denotes the $n$th Euler number. Using generating functions, one may deduce convolution formulae for the Euler numbers, an example of which is
\[ \sum_{k=0}^{n-2} \binom{n-2}{k}E_k E_{n-2-k} = 2^n(2^n-1)\frac{B_n}{n}. \]
Many of our results in the previous sections would look neater had we used $\lambda(s)$ and $\eta(s)$ instead of $\zeta(s)$. \qed
\end{remark}

Using the standard convolution formulae of the Bernoulli and the Euler polynomials, and aided by the reflection formula \eqref{reflection}, we can produce the following sums for $[p,q](s,t)$ as we did for $\zeta(s,t)$.
\begin{theorem} Using the notation of \eqref{gen}, we have, for integer $n \ge 2$,
\begin{align} \nonumber
& \quad 2\sum_{k=1}^{n-1} [1,2b](2k,2n-2k)+[2b,1](2k,2n-2k) = -4\sum_{k=1}^{n-1}[2b,2b](2k,2n-2k)  \\
& = (1-4^{1-n})\zeta(2n); \label{altn}
\end{align}
\begin{align} \nonumber
& \quad 4\sum_{k=1}^{n-1} [2a,2a](2k,2n-2k) =-4\sum_{k=0}^{n-1} [-4,-4](2k+1,2n-1-2k) \\
 & = \sum_{k=1}^{n-1} [1,2a](2k,2n-2k)+[2a,1](2k,2n-2k)  = (1-4^{-n})\zeta(2n);
\end{align}
\begin{align} \nonumber 
& \quad \sum_{k=1}^n [2a,-4](2k,2n+1-2k)+[-4,2a](2n+1-2k,2k) \\
& = \sum_{k=1}^{n-1} [2a,2b](2k,2n-2k)+[2b,2a](2n-2k,2k) = 0; \label{zer}
\end{align}
\begin{align} \nonumber
& \qquad 2\sum_{k=1}^n [1,-4](2k,2n+1-2k)+[-4,1](2n+1-2k,2k) \\
& = -2\sum_{k=1}^n [2b,-4](2k,2n+1-2k)+[-4,2b](2n+1-2k,2k) \nonumber \\
& = \beta(2n+1)+(16^{-n}-2^{-1-2n})\pi \,\zeta(2n), 
\end{align}
\end{theorem}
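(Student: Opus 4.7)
The plan is to apply the reflection formula \eqref{reflection} to each character-sum pair and then compute the residual $L$-value convolutions from classical trigonometric generating functions. For each sum of the shape $\sum_k [p,q](2k,\cdot)+[q,p](2k,\cdot)$, the substitution $k\mapsto n-k$ (or $n-1-k$) in the second summand turns it into $\sum_k [p,q](2k,\cdot)+[q,p](\cdot,2k)$, which by \eqref{reflection} collapses to $\sum_k L_p(2k)L_q(\cdot)-c\,L_{pq}$ where $c$ is the number of summands. For the diagonal cases $[2a,2a]$, $[2b,2b]$, $[-4,-4]$ the same substitution doubles the original sum, explaining the coefficients $\pm 4$ in the statement. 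The character products reduce at once: $\chi_1\chi_p=\chi_p$, $\chi_{2b}^2=\chi_1$, $\chi_{2a}^2=\chi_{2a}$, $\chi_{-4}^2=\chi_{2a}$, $\chi_{2a}\chi_{2b}=\chi_{2a}$, and $\chi_{2a}\chi_{-4}=\chi_{2b}\chi_{-4}=\chi_{-4}$, so every $L_{pq}$ becomes a known value of $\zeta$, $\lambda$, $\eta$, or $\beta$.

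Next, I would evaluate the residual convolutions using the generating series
\[
\pi z\cot(\pi z)=1-2\sum_{n\ge 1}\zeta(2n)z^{2n},\qquad \pi z\csc(\pi z)=1+2\sum_{n\ge 1}\eta(2n)z^{2n},
\]
\[
\tfrac{\pi z}{2}\tan(\tfrac{\pi z}{2})=2\sum_{n\ge 1}\lambda(2n)z^{2n},\qquad \tfrac{\pi}{4}\sec(\tfrac{\pi z}{2})=\sum_{n\ge 0}\beta(2n+1)z^{2n},
\]
together with the elementary derivative relations $\tan'=\sec^2$, $\sec'=\sec\tan$, $\cot'=-\csc^2$, $\csc'=-\csc\cot$. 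For instance, $\tan(\pi z/2)\sec(\pi z/2)=(2/\pi)\sec'(\pi z/2)$ gives $\sum_{k=1}^n\lambda(2k)\beta(2n+1-2k)=n\beta(2n+1)$, which cancels the $n\beta(2n+1)$ coming from reflection and delivers the vanishing $[2a,-4]$ sum in \eqref{zer}; $\csc\cot=-\csc'$ analogously produces $\sum\lambda(2k)\eta(2n-2k)=(n-1)\lambda(2n)$ and the $[2a,2b]$ vanishing. Squaring the $\tan$ and $\csc$ generating series (equivalently, $1+\tan^2=\sec^2$ and $1+\cot^2=\csc^2$) gives $\sum\lambda\lambda$ and $\sum\eta\eta$ with the common clean values $(1-4^{-n})\zeta(2n)$ and $(1-4^{1-n})\zeta(2n)$ after clearing $L_{pq}$; the mixed convolutions $\sum\zeta\lambda$, $\sum\zeta\eta$ needed for the other limbs of those chains follow from the same products, using $\lambda(2k)=(1-4^{-k})\zeta(2k)$ and $\eta(2k)=(1-2^{1-2k})\zeta(2k)$.

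The main obstacle is the fourth identity, whose right-hand side $\beta(2n+1)+(16^{-n}-2^{-1-2n})\pi\,\zeta(2n)$ carries the distinctive $\pi\zeta(2n)$ correction. This term traces back to the constant $1$ in the expansions of $\pi z\cot(\pi z)$ and $\pi z\csc(\pi z)$: multiplying either series by $\tfrac{\pi}{4}\sec(\pi z/2)$, one simplifies the cross terms $\cot(\pi z)\sec(\pi z/2)$ and $\csc(\pi z)\sec(\pi z/2)$ using the double-angle identities $\cos(\pi z)=2\cos^2(\pi z/2)-1$ and $\sin(\pi z)=2\sin(\pi z/2)\cos(\pi z/2)$, reducing them to combinations of $\csc(\pi z/2)$ and $\sec(\pi z/2)\tan(\pi z/2)$ whose coefficients are already known from the series above. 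Extracting the coefficient of $z^{2n}$ then pins down the closed forms of $\sum\zeta(2k)\beta(2n+1-2k)$ and $\sum\eta(2k)\beta(2n+1-2k)$, and after substituting into the reflection output the two convolutions are seen to collapse to the same expression up to sign -- precisely the double equality in the last line.
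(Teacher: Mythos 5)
Your proposal is correct and follows essentially the same route as the paper: apply the reflection formula \eqref{reflection} to each (suitably re-indexed) pair of character sums, reduce the $L_{pq}$ terms via the character products, and evaluate the leftover convolutions $\sum L_p L_q$ from the standard generating-function (Bernoulli/Euler, i.e.\ $\cot$, $\csc$, $\tan$, $\sec$) recursions such as $\sum_{k=1}^n \lambda(2k)\beta(2n+1-2k)=n\beta(2n+1)$, exactly as the paper indicates. The only quibble is a misattributed identity in your sketch (the convolution $\sum\lambda(2k)\eta(2n-2k)=(n-1)\lambda(2n)$ comes from $\tan(x/2)\csc x=\tfrac12\sec^2(x/2)$, or from combining your $\csc\cot$ and $\csc^2$ products via $\lambda=\tfrac12(\zeta+\eta)$, not from $\csc\cot=-\csc'$ alone), which does not affect the argument.
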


We note that \eqref{altn} concerns the alternating double zeta values studied in Section \ref{sec3} (c.f. the more elementary Theorem \ref{thmmm}). As mentioned before, the identities above rest on well-known recursions, for instance the second equality in \eqref{zer} is equivalent to the recursion
\[ \sum_{k=1}^n \beta(2n+1-2k)\lambda(2k) = n\beta(2n+1).\]
Also, the many pairs of equalities within each numbered equation in the theorem are not all coincidental but stem from the identity in \cite{bzb}:
\[ [1,q]+[2b,q]=2[2a,q], \]
where $q = 1, 2a, 2b$ or $-4$.

Moreover, one can show the following equations; as character sums are not the main object of our study, we omit the details:
\begin{align*}
& \quad \sum_{k=1}^n 4^{-k}\bigl([-4,1](2n+1-2k,2k)+[1,-4](2k,2n+1-2k)\bigr)  \\
& = \frac{1+2^{1-2n}}{6}\beta(2n+1) + (2^{-1-4n}-4^{-1-n}) \pi \, \zeta(2n), \\
& \quad \sum_{k=1}^{n-1} 4^k \bigl([2a,1](2k,2n-2k)+[1,2a](2n-2k,2k)\bigr) \\
& = \frac{(1-4^{-n})(8+4^n)}{6}\zeta(2n).
\end{align*}
Since there is an abundance of recursions involving the Bernoulli and the Euler numbers, many more such identities may be produced using the reflection formula.

\bigskip

\begin{acknowledgement}
The author wishes to thank John Zucker and Wadim Zudilin for illuminating discussions.
\end{acknowledgement}

\end{document}